\newtheorem{theorem}{Theorem}[section]
\newtheorem{corollary}[theorem]{Corollary} 
\newtheorem{lemma}[theorem]{Lemma}
\theoremstyle{definition}
\theoremstyle{remark}
\numberwithin{equation}{section}
\def\N{{\mathbb N}}
\def\Iso{{\mathrm{Iso}}}
\def\e{\varepsilon}
\def\Homeo{{\mbox{\rm Homeo}\,}}
\newcounter{quest}
\def\sym#1{{[\hspace{-.28em}(#1)\hspace{-.28em}]}}
\def\d{\delta}
\def\stm{\setminus}
\def\o{\omega}
\def\sB{\mathcal B}
\def\sF{\mathcal F}
\def\sN{\mathcal N}
\def\sbs{\subset}
\def\obr{^{-1}}
\begin{document}

  \title
{Projectively universal countable metrizable groups}

\author[Vladimir G. Pestov]{Vladimir G. Pestov$^{1}$}

\address{V.G.P.: Department of Mathematics and Statistics, University of
Ottawa, Ottawa, ON, K1N 6N5, Canada}

\address{Departamento de Matem\'atica, Universidade Federal de Santa Catarina, Trindade, Florian\'opolis, SC, 88.040-900, Brazil}

\email{vpest283@uottawa.ca}

\author{Vladimir V. Uspenskij}

\address{V.V.U.: Department of Mathematics, 321 Morton Hall, Ohio
University, Athens, Ohio 45701, USA}

\email{uspenski@ohio.edu}

\thanks{2010 {\it Mathematics Subject Classification.} 22A05}
\thanks{$^1$ Special Visiting Researcher of the program Science Without Borders of CAPES (Brazil), processo 085/2012.}


\date{October 26, 2015}

\keywords{Polish group, couniversal, completion, quotient group}

 \begin{abstract} 
We prove that there exists a countable metrizable topological group $G$
such that every countable metrizable group is isomorphic to a quotient of $G$.
The completion $H$ of $G$ is a Polish group such that every Polish group is isomorphic
to a quotient of $H$. 
 \end{abstract}

\maketitle

\section{Introduction}
A topological group is {\em Polish} if it is homeomorphic to a complete separable metric space. It has been known for about 30 years
that there exist {\em injectively universal} Polish groups \cite{Usp86, Usp90}, that is, Polish groups $G$ such that
every Polish group $H$ is isomorphic (as a topological group) to a (necessarily closed) subgroup of $G$. Those examples in particular answered a Scottish Book question by Schreier-Ulam (question 103 in \cite{SB}); for a recent new example, see \cite{Yaacov}.
A few years ago L.~Ding proved \cite{Ding},
answering a long-standing question of A.~Kechris (see \cite[Problem 2.10]{Kechris}, \cite[Problem 1.4.2]{BK}), 
that there also exists  a {\em projectively universal}, 
or {\em couniversal}, 
Polish group, that is, such a Polish group $G$ that every Polish group $H$ is isomorphic (as a topological group) to the quotient
group $G/N$ for some closed invariant subgroup $N\triangleleft G$. 

The aim of this note is to provide a shorter proof of a stronger
theorem: there exists a projectively universal countable metrizable group. The completion of such a group is a projectively
universal Polish group (Theorem~\ref{thm1}), so our result indeed implies that of Ding. We give two constructions in 
sections~\ref{s:p} and~\ref{s:u}, due to the first and the second author, respectively. 

We mention that  a projectively universal Abelian Polish group was constructed in \cite{PSW}, and an injectively universal Abelian Polish group was constructed in \cite{Shk}. The question remains open, due to Kechris,
whether every Polish group is isomorphic to a quotient of a subgroup of the unitary group $U(\ell^2)$; the answer is positive in the Abelian case \cite{GP}, \cite{Usp08}.

We thank the anonymous referee for remarks that have led to an improved presentation. 

\section{Polish groups as completions of countable groups}
\label{s:Polish}

We are going to explain why the completion of a projectively universal countable metrizable group is a projectively
universal Polish group. All our groups are Hausdorff. Recall that for every topological group $G$ its {\em Rajkov completion}
$\hat G$ is defined as the completion with respect to the upper bound of the left and right uniformities, see \cite{RD, ArhTk}.
If $G$ is a topological subgroup of the group $\Iso(X)$ of linear isometries of a Banach space $X$ (every topological group
admits such an embedding), then we can take for $\hat G$ the closure of $G$ in $\Iso(X)$. If $G$ is a (necessarily metrizable)
group with a countable base, then $G$ is Polish if and only if $G$ is Rajkov complete, that is, $G=\hat G$. Indeed, if $G$ is
Rajkov complete, then the two-sided uniformity on $G$ is complete and metrizable, hence admits a compatible complete metric.
It follows that $G$ is Polish. Conversely, suppose that $G$ is Polish. Every Polish space $X$ is a $G_\d$ subset in any 
Hausdorff space $Y\supset X$ containing $X$ as a dense subset. 
If $G\ne\hat G$, then $G$ is a dense $G_\d$ subset in the Polish group $\hat G$, and so is the translate $Gx$ for any 
$x\in \hat G\stm G$. Since $G\cap Gx=\emptyset$, we obtain a contradiction with the Baire Category Theorem. 

Thus every Polish group can be viewed as the Rajkov completion of any of its dense countable subgroups. 
Recall that the quotient of any Polish group by a closed invariant subgroup is Polish \cite[Proposition 1.2.3]{BK}, \cite[Theorem 4.3.26]{ArhTk}.
Theorem 4.3.26 in \cite{ArhTk} actually is more general and deals with \v{C}ech-complete groups, but for spaces 
with a countable base `Polish' and `\v{C}ech-complete' are equivalent. 
 
  \begin{theorem}
\label{thm1}
    Let $G$ be a projectively universal countable metrizable group. Then
    the completion $\hat{G}$ is a projectively universal Polish group. 
  \end{theorem}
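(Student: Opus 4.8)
The plan is to show that $\hat G$ is Polish and then to verify the projective universality directly, lifting each Polish group to a quotient of $\hat G$ by completing an appropriate quotient map at the countable level. Since $G$ is countable and metrizable, $\hat G$ has a countable base and is Rajkov complete, hence Polish by the discussion preceding the theorem; so the only substantive task is couniversality.

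First I would fix an arbitrary Polish group $H$ and use the observation from Section~\ref{s:Polish} that $H$ is the Rajkov completion of some dense countable subgroup $H_0 \le H$. Since $G$ is a projectively universal countable metrizable group, there is a continuous open surjective homomorphism (a quotient map) $q\colon G \to H_0$ with kernel $N \triangleleft G$; concretely $H_0 \cong G/N$ as topological groups. The goal is to extend $q$ to a quotient map $\hat q\colon \hat G \to H$. Because $H_0$ embeds densely in its completion $H$, the composite $G \xrightarrow{q} H_0 \hookrightarrow H$ is a continuous homomorphism into a complete group, so it extends uniquely to a continuous homomorphism $\hat q\colon \hat G \to \hat{H_0} = H$; this uses the functoriality of Rajkov completion for uniformly continuous maps, which holds because homomorphisms between topological groups are bi-uniformly continuous. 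The image $\hat q(\hat G)$ is a subgroup of $H$ containing the dense subgroup $H_0$; but I must be careful, since the image of a complete group need not be closed unless the map is open, so I would instead argue that $\hat q$ is surjective by a density-plus-completeness argument, or better, by identifying $\hat q$ with the canonical map $\widehat{G} \to \widehat{G/N}$ and invoking that the quotient of a complete group is complete.

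The cleanest route, and the one I expect the authors take, is to show that $\ker \hat q = \overline{N}$ (the closure of $N$ in $\hat G$) and that $\hat G / \overline{N} \cong H$. For this I would: (i) note $\overline N$ is a closed invariant subgroup of $\hat G$, so $\hat G/\overline N$ is Polish by the cited quotient theorem; (ii) observe that the quotient map $G \to G/N = H_0$ factors through $G \to \hat G \to \hat G/\overline N$, giving a continuous injective homomorphism $H_0 \to \hat G/\overline N$ with dense image (since $G$ is dense in $\hat G$); (iii) show this map is a topological embedding, which follows because $G \to \hat G$ and the quotient maps are all open onto their images at the level of the natural uniformities, so the subspace uniformity on $H_0$ induced from $\hat G/\overline N$ agrees with its own; (iv) conclude that $\hat G/\overline N$, being a Polish group containing $H_0$ as a dense subgroup, is the Rajkov completion of $H_0$, hence is topologically isomorphic to $H$. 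Step (iii), matching the uniformities precisely, is the main obstacle: one must check that a basic neighborhood of the identity in $\hat G/\overline N$ pulled back to $H_0$ is a neighborhood of the identity in $H_0$, which amounts to the fact that $q\colon G \to H_0$ being open survives passage to completions, and that the closure operation on $N$ does not create extra identifications that collapse open sets of $H_0$.

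Once this is done, since $H$ was an arbitrary Polish group, every Polish group is topologically isomorphic to a quotient $\hat G/\overline N$ of $\hat G$ by a closed invariant subgroup, which is exactly the assertion that $\hat G$ is projectively universal. I would also remark that the uniqueness of the continuous extension guarantees $\hat q$ is a homomorphism and that no choices made affect the final conclusion. The delicate points to write carefully are: the bi-uniform continuity of $q$ (so that it extends), the equality $\ker \hat q = \overline N$ rather than something larger, and the openness of $\hat q$; all three are standard facts about Rajkov completions but deserve explicit citation to \cite{RD} or \cite{ArhTk}.
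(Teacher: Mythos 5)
Your proposal is correct and follows essentially the same route as the paper: write the given Polish group as the completion of a countable dense subgroup $H_0$, realize $H_0$ as $G/N$, and identify $\hat G/\overline{N}$ (Polish, hence Rajkov complete) as the completion of $H_0$ via the dense topological embedding $G/N\hookrightarrow \hat G/\overline{N}$. The one step you flag as delicate (your step (iii), that this map is a topological embedding) is exactly what the paper disposes of by citing Bourbaki, Ch.~3, \S 2, Proposition 21, so your instinct that it is a standard fact requiring only a citation is right.
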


\begin{proof}
Write the given Polish group as the completion $\hat H$ of a countable metrizable group $H$. 
Write $H$ as a quotient of $G$. Then $H=G/N$ can be identified with a dense subgroup
of $\hat G/\hat N$ \cite[Ch. 3, \S 2, Proposition 21]{Bour}. The quotient $\hat G/\hat N$ of a Polish
group $\hat G$ is Polish, hence Rajkov complete. It follows that $\hat H$ can be identified with  $\hat G/\hat N$, 
a quotient of $\hat G$.
 \end{proof} 

\section{The first construction}
\label{s:p}

\subsection{}
We will construct a couniversal countable metrizable group using a technique developed by Roelcke and Dierolf \cite{RD}.

For a sequence $(B_n)_{n=1}^{\infty}$ of subsets of a group $G$, define their symmetric product as follows:
\[\sym{ B_n}=\sym{ B_n}_{n=1}^{\infty} = \bigcup_{n=1}^{\infty}\bigcup_{\sigma\in S_n} B_{\sigma(1)}\cdot B_{\sigma(2)}\cdot \ldots \cdot B_{\sigma(n)}.\]
For example, the proof of the Birkhoff--Kakutani theorem implies:
\begin{lemma}
  If $(V_n)_{n=1}^{\infty}$ be a sequence of neighbourhoods of the identity in a topological group $G$ such that $V_n^{-1}=V_n$ and $V_{n+1}^2\subseteq V_n$ for all $n$. Then the  for every $k$ one has
\[
\sym{V_n}_{n=k+2}^{\infty}\subseteq V_k.\]
\label{l:bk}
\end{lemma}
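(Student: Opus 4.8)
The statement is the combinatorial core of the Birkhoff--Kakutani metrization argument, so the plan is to isolate that core and prove it by induction. First I would unwind the definition of the symmetric product: after re-indexing the sequence $(V_n)_{n\ge k+2}$, every element of $\sym{V_n}_{n=k+2}^{\infty}$ belongs to a product $V_{n_1}V_{n_2}\cdots V_{n_r}$ in which $n_1,\dots,n_r$ are \emph{pairwise distinct} integers with all $n_i\ge k+2$ (in fact a consecutive block $k+2,\dots,k+r+1$), the factors occurring in an arbitrary order (the union over $\sigma\in S_n$ realizes all orders). Hence it suffices to prove the following. \emph{Claim:} for every integer $k\ge 1$ and every $r\ge 0$, any product $V_{n_1}\cdots V_{n_r}$ with $n_1,\dots,n_r$ pairwise distinct and $n_i\ge k+2$ for all $i$ satisfies $V_{n_1}\cdots V_{n_r}\subseteq V_k$ (the empty product being $\{e\}\subseteq V_k$).

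I would prove the Claim by strong induction on $r$, simultaneously for all $k$. The cases $r\le 2$ are immediate from $V_{n+1}\subseteq V_n$ (which follows from $e\in V_{n+1}$ and $V_{n+1}^2\subseteq V_n$) together with $V_{n+1}^2\subseteq V_n$; for instance, for $r=2$ one has $V_{n_1}V_{n_2}\subseteq V_{k+2}V_{k+2}\subseteq V_{k+1}\subseteq V_k$. For $r\ge 3$, let $m=\min_i n_i$, so $m\ge k+2\ge 3$. Since the indices are distinct, the factor $V_m$ occurs exactly once, and the product can be written $A\cdot V_m\cdot B$, where $A$ and $B$ are (possibly empty) products of the remaining factors, whose indices are all $\ge m+1$. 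Because the indices across the whole product are distinct, the value $m+1$ is the index of at most one factor, hence occurs in at most one of $A,B$; say it does not occur in $A$ (the other case is symmetric, interchanging $A$ and $B$). Then all indices of $A$ are $\ge m+2$, so $A\subseteq V_m$ by the induction hypothesis applied with $k$ replaced by $m$, and all indices of $B$ are $\ge m+1$, so $B\subseteq V_{m-1}$ by the induction hypothesis applied with $k$ replaced by $m-1$. Consequently
\[
A\cdot V_m\cdot B\ \subseteq\ V_m\cdot V_m\cdot V_{m-1}\ \subseteq\ V_{m-1}\cdot V_{m-1}\ \subseteq\ V_{m-2}\ \subseteq\ V_k,
\]
the last step because $m-2\ge k$. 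This completes the induction, and the Lemma follows since $\sym{V_n}_{n=k+2}^{\infty}$ is the union of all such products.

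The point requiring care --- and the reason the statement is phrased with the symmetric product and with the shift ``$k+2$'' rather than ``$k+1$'' --- is the interplay between distinctness and the quadratic condition $V_{n+1}^2\subseteq V_n$: distinctness is precisely what forces one of $A,B$ to avoid the index $m+1$, which lets that block be absorbed into $V_m$ so that only one ``level'' is lost at the step $V_m\cdot V_m\subseteq V_{m-1}$ instead of two. (Symmetry of the $V_n$ is in fact not used in this lemma, only elsewhere in the Birkhoff--Kakutani construction.) Apart from this, the argument is routine bookkeeping with the inclusions $V_{n+1}\subseteq V_n$ and $V_{n+1}^2\subseteq V_n$, keeping track only of the fact that every index that appears stays $\ge 1$, which it does since $m\ge k+2\ge 3$.
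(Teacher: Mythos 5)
Your proof is correct, and it takes the same route the paper intends: the paper gives no written proof of this lemma, merely asserting that it follows from the proof of the Birkhoff--Kakutani theorem, and your induction (reduce to products over pairwise distinct indices $\ge k+2$, split at the unique minimal index $m$, note that distinctness forces one of the two flanking blocks to avoid $m+1$ so that only one level is lost) is exactly the standard argument written out in full, with the bookkeeping done correctly. Your closing observation that the symmetry hypothesis $V_n^{-1}=V_n$ is not needed here is also accurate.
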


  Let now $\mathcal F$ be a filter of subsets of a group $G$.
  For a mapping $\Phi\colon G\to {\mathcal F}$ denote
  \[{\mathcal V}_{\Phi} =\bigcup_{g\in G} g^{-1}(\Phi(g)\cup\Phi(g)^{-1})g.\]
  According to \cite{RD}, the sets of the form $\sym{{\mathcal V}_{\Phi_n}}_{n=1}^{\infty}$, where $(\Phi_n)$ runs over all sequences of maps from $G$ to $\mathcal F$, 
  form a neighbourhood basis at identity in the finest group topology on $G$ in which ${\mathcal F}\to e$. E.g., in \cite{P} this was used to describe a neighbourhood basis in the free topological group on a uniform space.
  
  Since it is obvious that every set of the form $\sym{{\mathcal V}_{\Phi_n}}$ contains an element of the filter $\mathcal F$, the result of Roelcke and Dierolf follows from Lemma \ref{l:bk}, as well as the following easily verifiable results.
  
  \begin{lemma}
    Every set $\sym{{\mathcal V}_{\Phi_n}}$ is symmetric.
  \end{lemma}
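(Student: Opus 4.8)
The plan is to split the claim into two steps. First I would observe that every individual set $\mathcal{V}_{\Phi_n}$ is symmetric: by definition it is the union over $g\in G$ of the conjugates $g^{-1}(\Phi_n(g)\cup\Phi_n(g)^{-1})g$, and each such set is a conjugate of the obviously symmetric set $\Phi_n(g)\cup\Phi_n(g)^{-1}$, hence symmetric, and an arbitrary union of symmetric sets is symmetric. This reduces the lemma to the following purely formal statement: if $(B_n)_{n=1}^{\infty}$ is any sequence of symmetric subsets of a group $G$, then $\sym{B_n}$ is symmetric.

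To prove that, I would argue straight from the definition. A generic element of $\sym{B_n}$ is of the form $x=b_1 b_2\cdots b_m$ for some $m$, some $\sigma\in S_m$, and some choice of $b_i\in B_{\sigma(i)}$. Then $x^{-1}=b_m^{-1}b_{m-1}^{-1}\cdots b_1^{-1}$, and since each $B_{\sigma(i)}$ is symmetric we have $b_i^{-1}\in B_{\sigma(i)}$. Setting $c_j=b_{m+1-j}^{-1}$ for $1\le j\le m$, we get $x^{-1}=c_1 c_2\cdots c_m$ with $c_j\in B_{\sigma(m+1-j)}=B_{\tau(j)}$, where $\tau$ is $\sigma$ pre-composed with the order-reversing bijection $j\mapsto m+1-j$ of $\{1,\dots,m\}$. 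Since $\tau\in S_m$, this exhibits $x^{-1}$ as an element of $B_{\tau(1)}B_{\tau(2)}\cdots B_{\tau(m)}\subseteq\sym{B_n}$. Thus $\sym{B_n}^{-1}\subseteq\sym{B_n}$, and applying the same inclusion to $x^{-1}$ in place of $x$ yields equality. Equivalently, one can phrase this as $(B_{\sigma(1)}\cdots B_{\sigma(m)})^{-1}=B_{\sigma(m)}^{-1}\cdots B_{\sigma(1)}^{-1}=B_{\sigma(m)}\cdots B_{\sigma(1)}$ and note that the union defining $\sym{B_n}$ over $\sigma\in S_m$ is invariant under replacing $\sigma$ by its composition with the reversal permutation.

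I do not anticipate any real difficulty; the text itself classifies the statement as "easily verifiable". The only point requiring a modicum of care is the index bookkeeping in the second step — namely confirming that reversing the order of a product indexed by $\sigma\in S_m$ amounts to reindexing by another element of $S_m$, so that the reversed word genuinely lands in one of the terms of the union. Everything else is a one-line consequence of the symmetry of the building blocks $B_n=\mathcal{V}_{\Phi_n}$ established in the first step.
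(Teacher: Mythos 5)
Your proof is correct; the paper gives no argument for this lemma (it is dismissed as "easily verifiable"), and your two-step reduction — symmetry of each $\mathcal{V}_{\Phi_n}$ as a union of conjugates of symmetric sets, followed by closure of the symmetric product under inversion via the reversal permutation $\tau(j)=\sigma(m+1-j)$ — is exactly the intended routine verification. Nothing is missing.
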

  
  \begin{lemma} Assuming the sequence of mappings $\Phi_n\colon G\to {\mathcal F}$ is pointwise monotone (that is, $\Phi_{n+1}(g)\subseteq \Phi_n(g)$ for each $g\in G$), we have 
    \[\sym{{\mathcal V}_{\Phi_{2n}}}^2\subseteq\sym{{\mathcal V}_{\Phi_n}}.\]
  \end{lemma}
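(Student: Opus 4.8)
The plan is to unwind both definitions and reduce the inclusion to a small combinatorial fact about systems of distinct representatives. First I would record that pointwise monotonicity of the $\Phi_n$ forces ${\mathcal V}_{\Phi_{n+1}}\subseteq{\mathcal V}_{\Phi_n}$ for every $n$, so the sets $W_n:={\mathcal V}_{\Phi_n}$ form a decreasing sequence; in particular $W_c\subseteq W_d$ whenever $c\ge d$. A generic element $z$ of $\sym{{\mathcal V}_{\Phi_{2n}}}^2$ is a product $z=xy$ with
\[x\in W_{2\pi(1)}W_{2\pi(2)}\cdots W_{2\pi(m)},\qquad y\in W_{2\rho(1)}W_{2\rho(2)}\cdots W_{2\rho(p)}\]
for some $m,p\ge 1$, $\pi\in S_m$, $\rho\in S_p$. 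Hence $z$ lies in a single product of $q:=m+p$ of the sets $W_k$, the multiset of whose indices is $\{2,4,\dots,2m\}\uplus\{2,4,\dots,2p\}$.

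Next I would re-index this product. Because $(W_k)$ is decreasing, the index of any factor may be replaced by an arbitrary smaller positive integer at the cost of only enlarging the product, so it suffices to choose these replacements to be pairwise distinct and to exhaust $\{1,2,\dots,q\}$. Sorting the $q$ factor-indices in non-decreasing order as $\ell_1\le\ell_2\le\cdots\le\ell_q$, one obtains (writing $r=\min(m,p)$, $s=\max(m,p)$) the list $2,2,4,4,\dots,2r,2r,2r+2,2r+4,\dots,2s$, and a direct check gives $\ell_j\ge j$ for all $j$: on the doubled initial segment $\ell_{2i-1}=\ell_{2i}=2i\ge 2i$, and on the single tail $\ell_{2r+t}=2r+2t\ge 2r+t$. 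By Hall's theorem — equivalently, by the greedy rule of assigning the value $j$ to the factor of $j$-th smallest index, ties broken arbitrarily — this termwise domination yields distinct $d_1,\dots,d_q$ with $\{d_1,\dots,d_q\}=\{1,\dots,q\}$ and each $d_i$ at most the index of the $i$-th factor. Therefore the product containing $z$ is contained in $W_{d_1}W_{d_2}\cdots W_{d_q}$, which is one of the products occurring in $\sym{{\mathcal V}_{\Phi_n}}$ since $(d_1,\dots,d_q)$ is a permutation of $(1,\dots,q)$. As $z$ was arbitrary, this gives $\sym{{\mathcal V}_{\Phi_{2n}}}^2\subseteq\sym{{\mathcal V}_{\Phi_n}}$.

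I expect the only genuine work to be the combinatorial bookkeeping in the second step: verifying the termwise domination $\ell_j\ge j$ that licenses the system of distinct representatives, and in particular being careful in the unequal case $m\ne p$, where the sorted index list splits into a doubled initial segment and a single tail. The remaining ingredients — monotonicity of $({\mathcal V}_{\Phi_n})_n$, expanding an element of the square as a product of $m+p$ factors, and recognising the re-indexed product as a term of $\sym{{\mathcal V}_{\Phi_n}}$ — are routine.
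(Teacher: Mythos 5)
Your proof is correct. The paper leaves this lemma unproved (it is listed among the ``easily verifiable'' facts), and your argument --- passing to the decreasing chain ${\mathcal V}_{\Phi_{n+1}}\subseteq{\mathcal V}_{\Phi_n}$ and then re-indexing the $m+p$ factors by a permutation of $\{1,\dots,m+p\}$ dominated termwise by the original even indices via the check $\ell_j\ge j$ --- is exactly the intended verification; the appeal to Hall's theorem is harmless overkill, since the sorted greedy assignment you describe already does the job.
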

 
  \begin{lemma} For each $h\in G$, denote $\Phi^h$ the right translate of $\Phi$ by $h$, that is, $\Phi^h(g)=\Phi(gh)$. Then
    \[h^{-1}\sym{{\mathcal V}_{\Phi^h_n}}h\subseteq \sym{{\mathcal V}_{\Phi_n}}.\]
  \end{lemma}
  
  \begin{corollary}
          Let $\Phi_n\colon G\to {\mathcal F}$ be a pointwise monotone sequence of mappings. Then the sets
  \[\sym{{\mathcal V}_{\Phi^h_{kn}}},~~h\in G,~~k\in\N,\]
  form a subbasis at identity for a group topology on $G$.
  \label{c:basis}
\end{corollary}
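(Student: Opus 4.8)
The plan is to verify that the family $\sB$ of all finite intersections of the sets $\sym{\mathcal V_{\Phi^h_{kn}}}$, $h\in G$, $k\in\N$, satisfies the standard criterion for a neighbourhood basis at the identity of a (unique) group topology on $G$ (see \cite[Ch.~III, \S1.2]{Bour} or \cite{ArhTk}): $\sB$ is a filter base, each of its members contains $e$ and is symmetric, for every $W\in\sB$ there is $W'\in\sB$ with $W'W'\subseteq W$, and for every $W\in\sB$ and every $a\in G$ there is $W'\in\sB$ with $aW'a^{-1}\subseteq W$. Since a subbasis at the identity is, by definition, a family whose finite intersections form such a basis, this proves the corollary.

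The crux is that the defining family $\{\sym{\mathcal V_{\Phi^h_{kn}}}\}$ is closed, within itself, under exactly the operations the criterion calls for, precisely because it is indexed by all $h\in G$ and all $k\in\N$. For fixed $h$ and $k$ the sequence $\Psi_n:=\Phi^h_{kn}$ is again a pointwise monotone sequence of maps $G\to\mathcal F$ (pointwise monotonicity of $(\Phi_n)$ yields $\Phi_{k(n+1)}(gh)\subseteq\Phi_{kn}(gh)$), so the three lemmas above apply to it. Its $2n$-subsequence is $(\Phi^h_{(2k)n})_n$, and its right translate by $t\in G$ is $(\Phi^{th}_{kn})_n$, both again of the admissible form. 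Consequently: each $\sym{\mathcal V_{\Phi^h_{kn}}}$ is symmetric, by the symmetry lemma; $\sym{\mathcal V_{\Phi^h_{(2k)n}}}^2\subseteq\sym{\mathcal V_{\Phi^h_{kn}}}$, by the squaring lemma, the left-hand set being the member of the family with parameters $(h,2k)$; and, applying the conjugation lemma to $(\Psi_n)$ with $t=g^{-1}$, $g\,\sym{\mathcal V_{\Phi^{g^{-1}h}_{kn}}}\,g^{-1}\subseteq\sym{\mathcal V_{\Phi^h_{kn}}}$, the conjugated set being the member with parameters $(g^{-1}h,k)$. Passing from these facts about single sets to the corresponding ones for finite intersections is routine, by intersecting over the finitely many parameters in play; and $\sB$ is a filter base because a finite intersection of finite intersections is one.

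The only assertion not delivered verbatim by the lemmas is that $e\in\sym{\mathcal V_{\Phi^h_{kn}}}$, hence that $e$ lies in every member of $\sB$. For this I would use that $\mathcal F$ is a (proper) filter: picking $x$ in the nonempty member $\Phi^h_{2k}(g)$ of $\mathcal F$, pointwise monotonicity also gives $x\in\Phi^h_{k}(g)$, so $g^{-1}xg\in\mathcal V_{\Phi^h_{k}}$ and $g^{-1}x^{-1}g\in\mathcal V_{\Phi^h_{2k}}$, whence $e=(g^{-1}xg)(g^{-1}x^{-1}g)$ lies in the product $\mathcal V_{\Phi^h_{k}}\cdot\mathcal V_{\Phi^h_{2k}}$, one of the products appearing in $\sym{\mathcal V_{\Phi^h_{kn}}}$.

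I do not anticipate a genuine obstacle: the substance is entirely contained in the three preceding lemmas together with the filter property. The one mildly delicate point is the index bookkeeping — checking that $(\Phi^h_{kn})_n$, its even subsequences, and its right translates are all of the form to which those lemmas apply, so that the family $\{\sym{\mathcal V_{\Phi^h_{kn}}}\}$ does close up under squaring and conjugation within itself, which is what makes a \emph{sub}basis out of it.
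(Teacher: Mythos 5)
Your proof is correct and is essentially the argument the paper intends: apply the standard Bourbaki criterion for a neighbourhood filter base at the identity, noting that the family $\{\sym{{\mathcal V}_{\Phi^h_{kn}}}\}$ is closed under the required operations because $(\Phi^h_{kn})_n$, its even subsequence $(\Phi^h_{(2k)n})_n$, and its right translates $(\Phi^{th}_{kn})_n$ are again pointwise monotone sequences of the admissible form, so the three preceding lemmas apply. Your explicit verification that $e\in\sym{{\mathcal V}_{\Phi^h_{kn}}}$ via the product ${\mathcal V}_{\Phi^h_{k}}\cdot{\mathcal V}_{\Phi^h_{2k}}$ is a detail the paper leaves implicit, and it is handled correctly.
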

  
\subsection{}
  On the set $\N$ of natural numbers select a decreasing sequence of sets $(U_n)$ with empty intersection and such that $U_0=\N$ and $U_n\setminus U_{n+1}$ is infinite for each $n$. 
  
  Let $F(\N)$ denote the free group on the set $\N$ of free generators. 
  Define a sequence of functions $\phi_n\colon F(\N)\to\mathcal \N$ by setting $\phi_n(k^{\pm 1}) = {n+k}$ for all $k\in\N$ and extending each $\phi_n$ over the free group by recursion on the length of a reduced word $w=n_1^{\e_1}\ldots n_k^{\e_k}$, $\e_i=\pm 1$, $n_i\in \N$ as follows:
  \[\phi_n(w) \equiv  \phi_n(n_1^{\e_1}\ldots n_k^{\e_k}) =
    \max\left\{\phi_{\phi_n(n_1^{\e_1}\ldots n_{k-1}^{\e_{k-1}})}(n_k),
  \phi_{\phi_n(n_2^{\e_2}\ldots n_k^{\e_k})}(n_1)\right\}.\]
  Now set $\Phi_n(g)=U_{\phi_n(g)}$. This is easily seen to be a pointwise monotone family of maps from the free group to a filter generated by $(U_n)$. 
  
  As in Corollary \ref{c:basis}, the family $(\Phi_n)$ defines a metrizable group topology on $F(\N)$. This topology is Hausdorff: for every $k$, the neighbourhood $\sym{{\mathcal V}_{\Phi_{kn}}}$ is contained in the normal subgroup generated by $U_k$, and such subgroups separate points in the free group.
  
  We will now show that when equipped with the above topology, $F(\N)$ is a couniversal countable metrizable group.

\subsection{}
Let $(V_n)$ be a countable basis of neighbourhoods of identity of a metrizable group $G$. For every element $g$, define the {\em scale} of $g$ with regard to the fixed basis as a function $\theta_g\colon \N\to \N$, as follows:
\[\theta_g(n) =\min\{m\colon g^{-1}V_mg\cup gV_mg^{-1}\subseteq V_n\}.\]
For a general Polish group such as $\Homeo_+[0,1]$ for instance, it is easy to see that the scales formed with regard to any neighbourhood basis form a cofinal subset of $\N^\N$. Not so for countable metrizable groups.

\begin{lemma}
        A countable metrizable group $G$ admits a symmetric neighbourhood basis $(V_n)$ of identity satisfying $V_{n+1}^2\subseteq V_n$ for all $n$ and with regard to which the scale of every element satisfies $\theta_g(n)\leq n+m$ for a suitable $m=m(g)$.
  \label{l:scale}
\end{lemma}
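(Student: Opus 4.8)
The plan is to build the basis $(V_n)$ by hand, exploiting the fact that a countable metrizable group $G$ has countably many elements $g_0, g_1, g_2, \dots$, so we only need to control scales for countably many elements, and we can afford to ``use up'' one index level per element. Start from an arbitrary symmetric basis $(W_n)$ with $W_{n+1}^2 \subseteq W_n$ (available by Birkhoff--Kakutani). The idea is to pass to a subsequence $V_n = W_{k(n)}$ for a rapidly increasing sequence $k(n)$, chosen recursively so that at stage $n$ we guarantee the conjugation condition $g_i^{-1} V_n g_i \cup g_i V_n g_i^{-1} \subseteq V_{n-1}$ simultaneously for all $i < n$ (say). Since each $g_i$ is a fixed element, continuity of conjugation gives, for each $i$, some index $N(i,n)$ with $g_i^{-1} W_{N(i,n)} g_i \cup g_i W_{N(i,n)} g_i^{-1} \subseteq V_{n-1}$; taking $k(n)$ larger than $\max_{i<n} N(i,n)$ and larger than $k(n-1)+1$ handles all the relevant conjugates at once, and also preserves $V_{n+1}^2 \subseteq V_n$ since a subsequence of a basis with the squaring property inherits it.

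Next I would verify the scale bound. Fix $g = g_i$. By construction, for every $n > i$ we have $g^{-1} V_n g \cup g V_n g^{-1} \subseteq V_{n-1}$, hence by induction $g^{-1} V_n g \cup g V_n g^{-1} \subseteq V_{n-j}$ for $j \le n - i$; in particular $g^{-1} V_{n+i+1} g \cup g V_{n+i+1} g^{-1} \subseteq V_n$ for all $n \ge 1$ (and one checks the small cases $n$ near $0$ separately, shrinking the bound constant if needed). This says exactly that $\theta_g(n) \le n + i + 1$, so $m(g) = i + 1$ works where $i$ is the index of $g$ in our enumeration — a legitimate choice of $m = m(g)$ depending only on $g$.

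The main obstacle is bookkeeping the recursion so that one increment of the index simultaneously absorbs conjugation by \emph{all} previously listed elements, rather than just one; the trick is precisely that at stage $n$ we only demand the conjugation inclusion for the finitely many $g_i$ with $i < n$, which is why the bound $\theta_g(n) \le n + m(g)$ has $m(g)$ depending on \emph{where} $g$ sits in the enumeration. A secondary point to be careful about is that the $U_k$'s in the subsequent construction and the basis here interact correctly — but for this lemma in isolation, the only subtlety is making sure the recursion is well-founded (each $k(n)$ is chosen \emph{after} $k(n-1)$ and \emph{after} we know the finitely many indices $N(i,n)$), which is automatic. I would also remark that $(V_n)$ is indeed a neighbourhood basis because it is cofinal in $(W_n)$, and symmetric because each $W_n$ is.
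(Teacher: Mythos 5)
Your proposal is correct and is essentially the paper's own argument: the paper likewise enumerates $G=\{g_m\}$ and chooses the basis recursively so that $g_m^{-1}V_{n+1}g_m\cup g_mV_{n+1}g_m^{-1}\subseteq V_n$ for all $m\le n$, which yields $\theta_{g_m}(n)\le n+m+1$ exactly as in your verification. (The only cosmetic point is that your ``induction'' on $j$ is unnecessary --- the single-step inclusion plus monotonicity of the $V_n$ already gives the bound --- but this does not affect correctness.)
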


\begin{proof}
        Enumerate $G=\{g_m\colon m\in\N_+\}$ and choose a basis $V_n$ recursively so that for each $n$, $g_m^{-1}V_{n+1}g_m\cup g_mV_{n+1}g_m^{-1}\subseteq V_n$ whenever $m\leq n$.
\end{proof}

\begin{lemma}
        Let $U\subseteq\N$ be infinite, let $V$ be a countably infinite set, and let $m\colon V\to \N$ be a function whose image contains $0$. There exists a surjection $f\colon U\to V$ such that 
        \[\forall k\in U,~~m(f(k))\leq k.\]
        \label{l:sur}
\end{lemma}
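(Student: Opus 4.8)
The plan is to build $f$ by a straightforward greedy enumeration of $V$. First I would fix an element $v_*\in V$ with $m(v_*)=0$; such an element exists precisely because $0$ lies in the image of $m$. Then I would enumerate $V$ bijectively as $V=\{v_n:n\in\N\}$. The key observation is that, since $U$ is infinite and hence unbounded, for every $v\in V$ the set $\{k\in U:k\geq m(v)\}$ is cofinite in $U$, in particular infinite.

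Next I would choose inductively a sequence of \emph{pairwise distinct} elements $k_0,k_1,\dots$ of $U$ with $k_n\geq m(v_n)$ for each $n$. This is possible because, having chosen $k_0,\dots,k_{n-1}$, the set $\{k\in U:k\geq m(v_n)\}$ is infinite while $\{k_0,\dots,k_{n-1}\}$ is finite, so some admissible $k_n$ avoiding the earlier choices can be found.

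Finally I would define $f\colon U\to V$ by $f(k_n)=v_n$ for all $n$, and $f(k)=v_*$ for every $k\in U$ that is not of the form $k_n$. Then $f$ is a surjection, since every $v_n$ lies in its range, and the required inequality holds in both cases: for $k=k_n$ we have $m(f(k))=m(v_n)\leq k_n=k$, and for the remaining $k$ we have $m(f(k))=m(v_*)=0\leq k$.

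There is no real obstacle here; the only points worth flagging are that the infinitude of $U$ is exactly what keeps every admissible set $\{k\in U:k\geq m(v_n)\}$ infinite despite the lower bound, and the hypothesis that $0$ belongs to the image of $m$ is what allows all the ``leftover'' elements of $U$ to be dumped into the single fibre over $v_*$ without violating the constraint.
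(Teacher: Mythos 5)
Your proof is correct and rests on the same elementary greedy idea as the paper's: the paper runs through $U$ in increasing order and assigns to each $k$ the least not-yet-used element of $V$ whose $m$-value is at most $k$, defaulting to the element with $m$-value $0$, whereas you dually enumerate $V$ and pick a fresh, sufficiently large preimage in $U$ for each element, dumping the leftover points of $U$ onto the $m$-value-$0$ element. The two arguments use exactly the same ingredients (unboundedness of $U$ and the zero in the image of $m$), so this is essentially the paper's proof in mirror image.
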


\begin{proof} Identify $V$ with $\N$ in such a way that $m(0)=0$. Define $f(k)$ recursively in $k$ as the smallest element of $\N=V$ not yet chosen provided its image under $m$ does not exceed $k$, and $0$ otherwise. 
\end{proof}

Now choose a basis $(V_n)_{n=0}^\infty$ for $G$ as in Lemma \ref{l:scale}, where $V_0=G$. The corresponding function $m$ on $G$ satisfies $m(e)=0$. For every $n\in\N$ apply Lemma \ref{l:sur} to the sets $U=U_n\setminus U_{n+1}$, $V=V_n$, and the function $m$. We obtain a surjection $f_n\colon U_n\setminus U_{n+1}\to V_n$ with $m(f_n(k))\leq k$ for all $k\in U_n\setminus U_{n+1}\subseteq\N$. Amalgamate all $f_n$ to obtain a surjection $f\colon\N\to G$. For all $k,n\in\N$ one has 
\begin{equation}
        \label{eq:ineq}
        \phi_n(k) = n+k \geq n+m(f(k)) \geq \theta_{f(k)}(n).\end{equation}
Extend $f$ to a surjective group homomorphism $\bar f\colon F(\N)\to G$. 

\begin{lemma} For all $x\in F(\N)$ and $n\in\N$, one has $\theta_{\bar f(x)}(n)\leq \phi_n(x)$.
  \end{lemma}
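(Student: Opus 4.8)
The plan is to prove the inequality by induction on the length $k$ of the reduced word $w=n_1^{\e_1}\cdots n_k^{\e_k}$ representing $x$, relying on two elementary facts about scales. First, $\theta_{g^{-1}}=\theta_g$ for every $g\in G$, since the set $g^{-1}V_mg\cup gV_mg^{-1}$ does not change when $g$ is replaced by $g^{-1}$; thus the exponents $\e_i$ will play no role. Second, since $(V_n)$ is a decreasing basis, $m\ge\theta_g(p)$ forces $g^{-1}V_mg\subseteq V_p$ and $gV_mg^{-1}\subseteq V_p$; hence to obtain $\theta_{\bar f(x)}(n)\le M$ it is enough to verify the inclusion $\bar f(x)^{-1}V_M\bar f(x)\cup\bar f(x)V_M\bar f(x)^{-1}\subseteq V_n$ for $M=\phi_n(x)$.

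For the base of the induction: if $x=e$ the claim is trivial (with the convention $\phi_n(e)=n$), while if $x=k^{\pm1}$ is a single generator then $\bar f(x)=f(k)^{\pm1}$, so $\theta_{\bar f(x)}(n)=\theta_{f(k)}(n)\le\phi_n(k)=\phi_n(x)$, the middle equality being the first fact above and the inequality being \eqref{eq:ineq}.

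For the inductive step let $x$ have reduced length $k\ge2$ and set $y=n_1^{\e_1}\cdots n_{k-1}^{\e_{k-1}}$ and $z=n_2^{\e_2}\cdots n_k^{\e_k}$, both reduced of length $k-1$; put $a=f(n_1)^{\e_1}$, $b=f(n_k)^{\e_k}$, and $M=\phi_n(x)=\max\{\phi_{\phi_n(y)}(n_k),\ \phi_{\phi_n(z)}(n_1)\}$. Since $\bar f$ is a homomorphism, $\bar f(x)=\bar f(y)\,b=a\,\bar f(z)$. I would treat the two conjugation directions separately, matching each to one of these factorisations. For the first, $\bar f(x)V_M\bar f(x)^{-1}=\bar f(y)\bigl(bV_Mb^{-1}\bigr)\bar f(y)^{-1}$; since $M\ge\phi_{\phi_n(y)}(n_k)=\phi_n(y)+n_k\ge\theta_{f(n_k)}(\phi_n(y))=\theta_b(\phi_n(y))$ by \eqref{eq:ineq}, the second fact gives $bV_Mb^{-1}\subseteq V_{\phi_n(y)}$, and then the induction hypothesis $\theta_{\bar f(y)}(n)\le\phi_n(y)$ gives $\bar f(y)V_{\phi_n(y)}\bar f(y)^{-1}\subseteq V_n$; combining, $\bar f(x)V_M\bar f(x)^{-1}\subseteq V_n$. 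Symmetrically, $\bar f(x)^{-1}V_M\bar f(x)=\bar f(z)^{-1}\bigl(a^{-1}V_Ma\bigr)\bar f(z)$, and $M\ge\phi_{\phi_n(z)}(n_1)=\phi_n(z)+n_1\ge\theta_{f(n_1)}(\phi_n(z))=\theta_a(\phi_n(z))$ gives $a^{-1}V_Ma\subseteq V_{\phi_n(z)}$, whence $\bar f(x)^{-1}V_M\bar f(x)\subseteq\bar f(z)^{-1}V_{\phi_n(z)}\bar f(z)\subseteq V_n$ by the induction hypothesis on $z$. Together these yield $\theta_{\bar f(x)}(n)\le M=\phi_n(x)$.

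There is no deep obstacle here; the step that needs the most care is the inductive one, whose whole point is that the recursion defining $\phi_n$ was arranged so that the bound for $\bar f(x)$ splits — via the two one-letter deletions $x=y\,n_k^{\e_k}=n_1^{\e_1}z$ — into an outer part absorbed by the induction hypothesis on the shorter word and an inner part absorbed by \eqref{eq:ineq} for a single generator. What one must get right is (i) pairing the direction $g\mapsto gV_Mg^{-1}$ with the deletion of the last letter and $g\mapsto g^{-1}V_Mg$ with the deletion of the first, (ii) using $\theta_{g^{-1}}=\theta_g$ to discard the signs $\e_i$, and (iii) using monotonicity of the basis $(V_n)$ to turn the scale inequalities supplied by \eqref{eq:ineq} and the induction hypothesis into the actual inclusions $bV_Mb^{-1}\subseteq V_{\phi_n(y)}$ and $\bar f(y)V_{\phi_n(y)}\bar f(y)^{-1}\subseteq V_n$.
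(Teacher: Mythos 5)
Your proof is correct and takes essentially the same route as the paper's: induction on the reduced length, peeling off the first letter to handle $\bar f(x)^{-1}V_M\bar f(x)$ and the last letter to handle $\bar f(x)V_M\bar f(x)^{-1}$, absorbing the single generator via \eqref{eq:ineq} together with the symmetry $\theta_{g^{-1}}=\theta_g$, and the shorter word via the induction hypothesis. The paper merely leaves the intermediate inclusions and the ``similar'' second direction implicit, which you have spelled out.
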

  
  \begin{proof}
          Induction on the reduced length, $\ell(x)$, of $x$. For $\ell(x)=1$, this is Eq. (\ref{eq:ineq}) and the symmetry of $\theta$. Suppose the result holds for $\ell(x)\leq \ell$. Let $x=x_1^{\e_1}\ldots x_{\ell+1}^{\e_{\ell+1}}$, where $\e_i=\pm 1$, be an irreducible word. We have:
    \begin{eqnarray*}
            f(x)^{-1} V_{\phi_n(x)}f(x) 
      &\subseteq & 
      \bar f(x_2^{\e_2}\ldots x_{\ell+1}^{\e_{\ell+1}})^{-1} V_{{\phi_n(x_2^{\e_2}\ldots x_{\ell+1}^{\e_{\ell+1}})}}\bar f(x_2^{\e_2}\ldots x_{\ell+1}^{\e_{\ell+1}})\\
      \mbox{\tiny induction hypothesis} &\subseteq & 
      \bar f(x_2^{\e_2}\ldots x_{\ell+1}^{\e_{\ell+1}})^{-1} V_{{\theta_{\bar f(x_2^{\e_2}\ldots x_{\ell+1}^{\e_{\ell+1}})}}(n)}\bar f(x_2^{\e_2}\ldots x_{\ell+1}^{\e_{\ell+1}})\\
      &\subseteq & V_n.
      \end{eqnarray*} 
Similarly, $f(x) V_{\phi_n(x)}f(x)^{-1}\subseteq V_n$, and we conclude.
  \end{proof}

  As an application of the lemma, $\bar f\left( {\mathcal V}_{\Phi_{n}}\right)\subseteq V_n$ for every $n\in\N$. Now
Lemma \ref{l:bk} implies that for every $k$,
$\bar f\left(\sym{{\mathcal V}_{\Phi_{n}}}_{n=k+2}^{\infty}\right)\subseteq V_k$.
  Consequently, the homomorphism $\bar f$ is continuous. 
  Since each neighbourhood of identity in $F(\N)$ contains one of the sets $U_n$ which is being mapped by $\bar f$ onto $V_n$, the homomorphism $\bar f$ is also open.

\section{The second construction}
\label{s:u}

Another construction of 
a projectively universal countable metrizable group is based on the following idea:
take a collection of size $2^\o$ representing, up to an isomorphism, all possible
countable metrizable groups. The product of this collection contains a countable dense subgroup that admits an
open projection onto any factor. Such a group is not metrizable, but it is possible to refine its topology so that
one gets a metrizable group while all projections onto factors remain open.

Let $G$ be an injectively universal topological group with a countable base.
For example, we can take for $G$ the group $\Iso(U)$ of isometries
of the Urysohn space or the group $H(Q)$ of all self-homeomorphisms of the Hilbert cube. Consider the countable power 
$G^\N$ of $G$, and let $X$ be $G^\N$ equipped with a finer zero-dimensional topology with a countable base $\sB$. 
We assume that $\sB$ consists of clopen sets and is closed under complements, finite unions and hence also under finite intersections. 
In other words, $\sB$ is a Boolean algebra of clopen sets.

We consider each $x\in X=G^\N$ as an index of a certain countable metrizable group $G_x$, namely, the subgroup of $G$
generated by the elements of the sequence $x$. Every countable metrizable group is of the form $G_x$ for some $x\in X$. 
Consider the group (without topology) $C(X,G)$ of all continuous maps $f:X\to G$. We are going to construct a countable
subgroup $K\sbs C(X,G)$ and a metrizable group topology on $K$ such that for every $x\in X$ the evaluation map $f\mapsto f(x)$
from $K$ to $G$ is an open map onto $G_x$. This implies that $K$ is a projectively universal countable metrizable group.

Let $p_n\in C(X,G)$ be defined by $p_n(x)=x_n$ for $x=(x_n)$ and $n\in \N$. Let $H$ be the countable subgroup of 
$C(X,G)$ generated by all the $p_n$'s. Let $K\sbs C(X,G)$ be the countable subgroup defined as follows: if $f\in C(X,G)$, 
then $f\in K$ if and only if  
there exist a finite decomposition $X=Y_1\cup\dots\cup Y_n$, $Y_i\in\sB$, and elements $f_1,\dots,f_n\in H$ such that
$f|Y_i=f_i|Y_i$, $i=1,\dots,n$. Clearly for every $x\in X$ the evaluation map $f\mapsto f(x)$ sends $H$ and $K$ onto $G_x$. 
We are going to introduce a metrizable group topology on $K$ such that all the evaluation maps $K\to G_x$ are open. 

Let $\sN(G)$ be the filter of neighborhoods of $1_G$ in $G$ (we'll use a similar notation also for other groups). 
For $U\in \sN(G)$ let $W_U=\{f\in K: f(X)\sbs U\}$. The filter $\sF_0$ on $K$ generated by
the collection $\{W_U: U\in \sN(G)\}$ may be not invariant under inner automorphsisms; 
let $\sF$ be the smallest invariant filter containing $\sF_0$. The filter $\sF$ is generated by sets of the form $g W_U g\obr$
($U\in \sN(G)$, $g\in K$). Since $K$ is countable and $G$ is metrizable, $\sF$ has a countable base. Equip $K$ with 
the group topology for which $\sF=\sN(K)$ is the filter of neighborhoods of $1_K$. Then $K$ is metrizable.

Pick $x\in X$. The evaluation map $ev_x:K\to G_x$ defined by $ev_x(f)=f(x)$ clearly is continuous.
We must prove that it is also open. 
It suffices to check that for every $g_1,\dots, g_n\in K$ and $U\in \sN(G)$ there exists $V\in \sN(G)$ such that
$$
V\cap G_x\sbs ev_x\left(\bigcap_{i=1}^n g_iW_Ug_i\obr\right).
$$
Put $a_i=ev_x(g_i)=g_i(x)\in G$. Pick a symmetric open $V\in\sN(G)$ such that $a_i\obr V^3a_i\sbs U$ for $i=1,\dots,n$.
We check that $V$ has the required property. 

Let $b\in V\cap G_x$. Pick $f'\in H$ such that $f'(x)=b$. There exists a neighborhood $Y\in \sB$ of $x$ such that $f'(Y)\sbs V$
and $g_i(Y)\sbs Va_i$ ($i=1,\dots, n$). Let $f\in K$ be such that $f|Y=f'|Y$, $f|{X\stm Y}=1_G$.
For every $y\in Y$ and $i=1,\dots, n$ we have $g_i(y)\in Va_i$, $f(y)\in V$, hence
$$
g_i(y)\obr f(y)g_i(y)\in a_i\obr V^3 a_i\sbs U,
$$
and this is trivially true if $y\in X\stm Y$. It follows that $g_i\obr fg_i\in W_U$ and $f\in \bigcap_{i=1}^n g_iW_Ug_i\obr$.
Thus $b=f(x)\in ev_x(\bigcap_{i=1}^n g_iW_Ug_i\obr)$. We have proved that $ev_x:K\to G_x$ is open.

\end{document}